\newcommand{\note}[1]{}
\renewcommand\marginpar[1]{}
\newcommand{\comment}[1]{}
\newcommand{\complexes}{{\bf C}}
\providecommand{\qed}{\vrule height 6pt depth 0pt width 3 pt}
\newcommand{\reals}{{\bf R}}
\newcommand{\rank}{\mathop{\rm rank}\nolimits}
\newcommand{\mspan}{\mathop{\rm span}\nolimits}
\newcommand{\BibTeX}{{\rm B\kern-.05em{\sc i\kern-.025em b}\kern-.08em     
    T\kern-.1667em\lower.7ex\hbox{E}\kern-.125emX}}
\newcommand{\ball}[2]{B_{#2}(#1)}
\newenvironment{proof}[1][Proof]{\begin{trivlist}\item[\hskip \labelsep
{\it #1. }]}{\hfill \qed \goodbreak \end{trivlist}}
\numberwithin{equation}{section} 
\newtheorem{theorem}[equation]{Theorem}
\newtheorem{proposition}[equation]{Proposition}
\newtheorem{corollary}[equation]{Corollary}
\newtheorem{lemma}[equation]{Lemma}
\begin{document}

\title{Estimates for a family of multi-linear forms}

\author{
Zhongyi Nie
\\ Department of Mathematics 
\\University of Kentucky
\\ Lexington, Kentucky 
\and 
   Russell M. Brown
\\ Department of Mathematics 
\\University of Kentucky
\\ Lexington, Kentucky }

\date{}

\maketitle

\abstract{ 
We consider a special class of the multi-linear forms studied by
Brascamp and Lieb.  For these forms, we are able to characterize the
 $L^p$ spaces  for which the form is bounded. We use this
characterization to study a non-linear map that arises in scattering
theory. 

{\em Keywords: }Multi-linear interpolation, scattering theory

{\em Mathematics subject classification: }26D20, {\em Secondary: } 37K10,  46B70 
}

\
\note
{ Index of notation.

\begin{tabular}{rl}

\sc Symbol &  Meaning \rm \\
$k$ & the form lives on $k$ copies of $ \reals ^ \ell$ \\
$m  $ & the number of functions in our form. \\
$\Lambda$ & the general form \\
\end{tabular}
}

\section{Introduction}

In this note, we consider a family of multi-linear forms involving
fractional integration and establish estimates for these forms on
products of $L^p$-spaces. Using these estimates, we are able to give a
proof of continuity of a scattering map in two dimensions. This
scattering map
may be found in work of Fokas \cite{AF:1983}, as well as later work of
several authors including Fokas and Ablowitz \cite{FA:1984}, Beals and
Coifman \cite{BC:1988}, and Sung
\cite{LS:1994a,LS:1994b,LS:1994c}. These authors were interested in a
two-dimensional scattering theory that served to transform solutions
of one of the Davey-Stewartson equations, a nonlinear evolution
equation in two space dimensions, into solutions of a linear
system. The map reappeared in work of Brown and Uhlmann \cite{BU:1996}
on the inverse conductivity problem.  In the inverse conductivity problem, we
are interested in recovering a conductivity coefficient from the
Dirichlet to Neumann map. As part of this recovery, it is 
interesting to know something about the continuity properties of the
scattering map. This was one motivation for the work of Brown
\cite{RB:2001b}.  This work of Brown shows that the scattering map is
continuous in a neighborhood 0 in $L^2$. In this article, we provide a
new proof of some of the results of Brown and  give a description  of
the set of $L^p$ spaces where certain multi-linear forms are bounded.
This description appeared earlier in work of Barthe
\cite[p.~348]{FB:1998}.   

To describe our main  result in more detail, for $n=0,1,2,\dots$ we consider
the  multi-linear form 
\begin{eqnarray}
\lefteqn{\Lambda_n (t, q_0, q_1,\dots, q_{2n} ) }\nonumber \\
&=&  \int _ { \complexes ^ { 2n +1} } \frac { t(\sum _ {k = 0 } ^ { 2n} (
  -1) ^ k x _k ) \prod_{ k =0} ^ {2n} q_ k ( x_k ) } 
{ ( x_ 0-x_1) ( \bar x_ 1 - \bar x_2) \dots ( \bar x_{ 2n-1} - \bar x_
  {2n})}\, dx_0 \, dx_1\dots dx_{ 2n}. \label{MainFormDef}
\end{eqnarray} 
In this expression, we are using $x_j$ to stand for a complex variable
and $dx_j$ denotes Lebesgue measure on the complex plane.
Our goal is to show that  there is constant $c$ so that 
\begin{equation}\label{MainFormEstimate}
\Lambda_n (t, q_0, q_1, \dots, q_{2n})\leq c^n \|t\|_{1/2} \prod _ { j =0
} ^ {2n} \| q_j \|_{1/2}.
\end{equation}
Here and throughout this paper, we will denote the $L^p$ norm of a
function $f$ by $\|f\|_ {1/p}$ with the  convention that $
1/\infty$ is 0. 
Thus, we  provide a new proof of the main estimate in the article
\cite{RB:2001b}, but without the precise dependence of the constant. 
As in Brown's work \cite{RB:2001b}, this leads to the continuity of the scattering
map on $L^2$.  
The method is perhaps a bit more flexible and we are able to give an
extension of these results when some of the functions come from  $L^p$
spaces  for $p\neq 2$.  We use this to obtain 
an analogue  of the Hausdorff-Young inequality for the scattering
map. 

We briefly describe the results of this paper. Most  of the results of
this paper
first appeared in the Ph.D.~dissertation of the author Nie \cite{ZN:2009}.
The first part of our paper considers general multi-linear forms 
\begin{equation}
\label{GeneralForm}
\Lambda(a_1, a_2, \dots, a_m) = \int _ { \reals ^ { k\ell }} \prod_{ j =
  1} ^ m  a_j(f_j\cdot
x)\, dx, \qquad a_j \in L^1( \reals ^\ell ) \cap L^ \infty(\reals ^\ell)
\end{equation}
where $ x\in \reals ^ { k\ell} $, $ x= (x_1, \dots x_k) $ and each $ x_i
\in \reals ^ \ell $, $ f_ j \in  \reals ^ k$ and $M= \{ f_1, f_2 ,
\dots f_m\}$ is a collection of vectors in $ \reals ^ k$. We define
$ f_j \cdot x = \sum _ { i =1 } ^ k f_{ji} x_i$. We will 
 arrive at the form $ \Lambda_n$ by setting some of the
functions $a_j$ to 
 be $1/x$ which lies in the Lorentz space $L^ { 2, \infty}(
\complexes)$.    Thus, we will be interested in estimates in Lorentz
spaces. 
We consider the set $ \Omega _ \Lambda$ which is defined to be the set
of $ \theta= ( \theta_1, \dots, \theta_m)$ for which we have the
inequality 
\begin{equation}\label{GeneralEstimate}  
\Lambda ( a_1 , \dots , a_m ) \leq C_ \theta \prod _{ j =1 } ^ m \|a_j
\|_ {\theta_j } 
\end{equation} 
for some constant $ C_ \theta$. We show that this set $ \Omega
_\Lambda$ is the matroid polytope (or, more precisely, the basis
matroid polytope) for the matroid formed by the set of vectors $M
\subset \reals ^k$.  Recall that a set of vectors $M$ and the
collection of linearly independent subsets of $M$ form a matroid. We
recall that 
the {\em matroid polytope }for $M$ (or basis matroid polytope for
$M$), $\Omega_M$, is the 
convex hull of the vectors $\{ \chi _B : B \mbox { is a basis for
  $\reals^k$}\}$.  
 We are using $ \chi_S$ to denote the indicator
function of a set $S \subset M$. Thus the $i$th component of $ \chi_S$
is 1 if $ f_i \in S$ and 0 otherwise.  The matroid polytope can also
be described by a set of inequalities and we are able to use this
description to establish our estimates. We refer to the monograph of
Oxley \cite{MR1207587} or the textbook of Lee \cite{MR2036957} for
basic facts about matroids. We will use two operations on sets in
matroids. For matroids given as a subset of a vector space, we may
define the {\em span }of a set $S \subset M$ as $ M \cap V$ where $V$
is the span of $S$ in the vector space. The {\em rank }of $S$ in the
matroid sense can be be defined as the dimension of the vector space
spanned by $S$.

The characterization of the set $ \Omega _ \Lambda$
as a matroid polytope may be found in the  work of Barthe
\cite[p.~348]{FB:1998}, though Barthe does not use the word matroid
polytope.

In passing from estimates in Lebesgue spaces to estimates in Lorentz
spaces, we make use of a multi-linear interpolation theorem of
S.~Janson \cite{SJ:1986}. 
  The constants in this theorem
depend on the distance from the boundary of $ \Omega_M$ and obtaining
the correct dependence on $n$ as $n$ tends to infinity requires
additional work.  
We do not attempt to summarize all of the
work related to multi-linear forms,  but refer readers to the  survey paper of Beckner
\cite{MR1315541}, 
 recent work by Bennett, Carbery,
Christ,  and Tao \cite{BCCT:2005,MR2377493}, Carlen, Lieb and Loss
\cite{MR2077162} as well as the earlier work of
Brascamp and Lieb \cite{MR0412366} for related work on multi-linear
estimates. Note that our work is much simpler in that we do not make
an effort to find the optimal estimate in our inequalities.  
Using estimates in Lorentz
spaces to 
obtain estimates for fractional integration dates back at least to 
 \cite{MR0146673} and Beckner \cite{MR1315541}  discusses forms
involving fractional  integration. 

Both authors  thank Jakayla Robbins for pointing out to us that the set
$ \Omega_M$ is a matroid polytope.

\section{Estimates in Lebesgue spaces}
\label{Lebesgue}

In this section, we continue to consider the form
(\ref{GeneralForm}). 
We begin with the following simple proposition.

\begin{proposition} \label{BaseBound}
If $ B \subset M$  is a basis for $ \reals ^ k$,
 and $ \chi _B = ( \theta _1 , \dots \theta _m)$,   then we have
$$
\Lambda ( a_1, \dots , a_m) \leq |\det \hat B | ^ { - \ell } 
\prod _ { j =1 } ^ m \| a_j \| _ { \theta_j } . 
$$
Here we are using $ \hat B$ to denote the $k\times k$ matrix whose
rows are the elements of  $B$. 
\end{proposition}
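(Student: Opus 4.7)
The plan is to interpret the indices carefully and then reduce to a straightforward change of variables. Since $\chi_B = (\theta_1,\dots,\theta_m)$ is the indicator vector of $B$, we have $\theta_j = 1$ when $f_j \in B$ and $\theta_j = 0$ otherwise. With the convention $1/\infty = 0$, this means $\|a_j\|_{\theta_j} = \|a_j\|_1$ for $f_j \in B$ and $\|a_j\|_{\theta_j} = \|a_j\|_\infty$ for $f_j \notin B$. So the right-hand side is telling us to use $L^1$ on the basis directions and $L^\infty$ on the rest.

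The first step is to dispatch the non-basis factors by the trivial pointwise bound $a_j(f_j \cdot x) \le \|a_j\|_\infty$, which we can pull outside the integral. This leaves
$$\Lambda(a_1,\dots,a_m) \le \Big(\prod_{f_j \notin B} \|a_j\|_\infty\Big) \int_{\reals^{k\ell}} \prod_{f_j \in B} a_j(f_j \cdot x)\, dx.$$

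The second step is to evaluate the remaining integral by the linear change of variables $y_i = f_{j_i}\cdot x$, where $B = \{f_{j_1},\dots,f_{j_k}\}$. Since each $x_i$ lies in $\reals^\ell$ and $f_j \cdot x = \sum_{i=1}^k f_{ji} x_i$ is also in $\reals^\ell$, the Jacobian matrix of the map $x \mapsto (f_{j_1}\cdot x,\dots,f_{j_k}\cdot x)$ on $\reals^{k\ell}$ is the Kronecker product $\hat B \otimes I_\ell$, whose determinant equals $(\det \hat B)^\ell$. Because $B$ is a basis, $\det \hat B \neq 0$, so this map is a bijection of $\reals^{k\ell}$ and
$$\int_{\reals^{k\ell}} \prod_{f_j \in B} a_j(f_j \cdot x)\, dx = |\det \hat B|^{-\ell} \int_{\reals^{k\ell}} \prod_{i=1}^k a_{j_i}(y_i)\, dy = |\det \hat B|^{-\ell}\prod_{f_j \in B}\|a_j\|_1$$
by Fubini. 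Combining with the previous bound gives the claim.

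There is essentially no hard step here; the only point requiring a moment of care is the Jacobian computation, where one must not forget the factor $\ell$ in the exponent coming from the block structure of the change of variables on $(\reals^\ell)^k$. Everything else is bookkeeping on the basis versus non-basis indices.
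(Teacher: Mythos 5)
Your proof is correct and follows essentially the same route as the paper: a change of variables $y_i = f_{j_i}\cdot x$ with Jacobian determinant $(\det\hat B)^\ell$, after disposing of the non-basis factors via the trivial $L^\infty$ bound. The paper's version is terser (it leaves the $L^\infty$ step and the Fubini step as "obvious"), but the argument is the same.
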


\begin{proof} We will make a change of variables in the integral
  defining $ \Lambda$. We let $ B = \{ f_{ i _1} , \dots , f_ { i _
    k}\}  $ and define $ y_j = f_{i_j} \cdot x$. If we make this change of
variables in the form $\Lambda$, the estimate of the Lemma becomes
obvious. To obtain the constant, we observe that the determinant of
the map $ x \rightarrow y$ on $\reals ^ {k\ell}$ is $|\det \hat B | ^ \ell$. 
\end{proof}

As noted above a  set of vectors $M = \{f_1, \dots, f_m\}$ gives a
matroid. 
Since each basis for $\reals ^k$ contains $k$ elements, the matroid
polytope for $M$, $\Omega _M$ lies in the 
hyperplane given by $ \sum_{ i=1 } ^ m  \theta _i =k$. 
As a corollary of this definition and the previous theorem, we have
the following. 
\begin{corollary} 
\label{LebesgueBound}
 For $ \theta \in \Omega _M$ and $\Lambda$ as defined
  in (\ref{GeneralForm}), we have
$$
\Lambda (a_1, \dots, a_m) \leq C \prod_ { i =1 } ^ m \| a_i\| _ {\theta _i }. 
$$
where $ C = \max \{ |\det \hat B|  ^ { -\ell}: B \subset M \mbox{ is a
    basis for $ \reals ^ k$}\}$. 
\end{corollary}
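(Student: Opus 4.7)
The plan is to derive the estimate from Proposition~\ref{BaseBound} by multi-linear interpolation. Since $\Omega_M$ is by definition the convex hull of the vectors $\chi_B$ as $B$ ranges over the bases of $\reals^k$ contained in $M$, Carath\'eodory's theorem lets us write any $\theta \in \Omega_M$ as a finite convex combination
$$\theta = \sum_{j=1}^N s_j \chi_{B_j}, \qquad s_j \geq 0, \quad \sum_{j=1}^N s_j = 1,$$
for some bases $B_1, \dots, B_N \subset M$.

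At each vertex $\chi_{B_j}$, Proposition~\ref{BaseBound} provides the bound
$$\Lambda(a_1,\dots,a_m) \leq |\det \hat B_j|^{-\ell} \prod_{i=1}^m \|a_i\|_{(\chi_{B_j})_i},$$
which, since the entries of $\chi_{B_j}$ are $0$ or $1$, is an estimate on a product of $L^1$ and $L^\infty$ spaces. Applying the multi-linear Riesz-Thorin theorem (which one obtains by iterating the ordinary bilinear complex interpolation theorem) at the point $\theta = \sum_j s_j \chi_{B_j}$ gives
$$\Lambda(a_1,\dots,a_m) \leq \prod_{j=1}^N \bigl(|\det \hat B_j|^{-\ell}\bigr)^{s_j} \prod_{i=1}^m \|a_i\|_{\theta_i}.$$

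Since each vertex constant satisfies $|\det \hat B_j|^{-\ell} \leq C$ and the weights $s_j$ sum to one, the weighted geometric mean on the right is likewise bounded by $C$, and the corollary follows. I do not anticipate any serious obstacle: the main technical points are the presence of $L^\infty$ norms at the vertices, which is accommodated by the standard formulation of multi-linear Riesz-Thorin, and the reduction to a finite convex combination, which is immediate from the polytope description of $\Omega_M$.
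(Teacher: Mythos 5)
Your proposal is correct and follows essentially the same route as the paper: the paper's proof is precisely Proposition~\ref{BaseBound} at the vertices $\chi_B$ combined with complex multi-linear interpolation (Bergh and L\"ofstr\"om, Theorem 4.4.1), which is what you carry out explicitly via the convex-combination decomposition and iterated Riesz--Thorin. Your version just spells out the details the paper leaves to the cited reference, including the geometric-mean bound on the constant.
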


\begin{proof} The Corollary follows from Proposition \ref{BaseBound}
  and  the theorem on   complex multi-linear interpolation 
from  the monograph of Bergh and L\"ofstrom \cite[Theorem 4.4.1]{BL:1976}. 
\end{proof}

We observe that the constant in this estimate could  be 
improved. In our application, the determinant  will  be 1  at every
vertex and thus we choose to not dwell on the constant.

The converse of Corollary \ref{LebesgueBound} also holds. If 
estimate (\ref{GeneralEstimate}) holds for a finite constant, then 
 the point $ \theta$ lies in the matroid polytope, $
\Omega _M$.  This converse is not needed in our argument, but is
included for completeness. 
To establish the converse, we recall Theorem 2.1 in the work of Bennett
{\em et. al. }\cite{BCCT:2005}, specialized to the form in
(\ref{GeneralForm}).  
\begin{theorem} \cite[Theorem 2.1]{BCCT:2005} 
\label{Bennett}
We have the estimate 
(\ref{GeneralEstimate}) for $ \theta \in [0,1]^m$ if and
  only if we have 
$$ \sum  _{ i=1} ^m \theta _i  = k$$
and  for every subspace $V \subset \reals ^ { k\ell}$, we have
$$
\dim(V) \leq \sum _ { i =1} ^ m \theta _i \dim( f_i \cdot V).
$$
\end{theorem}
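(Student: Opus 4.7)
The plan is to split the biconditional into necessity (elementary, via well-chosen test functions) and sufficiency (the deep direction, requiring heat-flow monotonicity). The characterization is dictated by the natural symmetries of the form together with an extremality principle: centered Gaussians extremize the ratio on the two sides of (\ref{GeneralEstimate}), and the stated inequalities are exactly the obstructions these Gaussians impose.

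For necessity, I would first derive $\sum \theta_i = k$ by scaling: replacing each $a_i$ with the dilate $a_i(\lambda\,\cdot)$ and performing the change of variable $x \mapsto \lambda^{-1}x$ in $\Lambda$, both sides of (\ref{GeneralEstimate}) pick up powers of $\lambda$ that must balance, forcing $\ell\sum\theta_i = k\ell$. For the subspace inequality, fix $V \subset \reals^{k\ell}$ with $\dim V = d$ and let $d_i = \dim(f_i\cdot V)$. Testing with $a_i = \chi_{T_i}$, where $T_i$ is a $\delta$-tubular neighborhood of $f_i\cdot V$ inside a ball of radius $R$ in $\reals^\ell$, gives $\|a_i\|_{1/\theta_i} \sim R^{d_i\theta_i}\delta^{(\ell - d_i)\theta_i}$, while the left-hand side is bounded below by the measure of a $\delta$-tube about $V$ in $\reals^{k\ell}$, which scales as $R^d\delta^{k\ell - d}$. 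Matching powers of $\delta$ as $\delta \to 0^+$ and using $\sum\theta_i = k$ forces $d \leq \sum_i \theta_i d_i$.

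For sufficiency I would follow the heat-flow approach of Bennett-Carbery-Christ-Tao. The key reduction is to centered Gaussian inputs $a_i(y) = e^{-\pi\langle A_i y,\, y\rangle}$ with positive definite $A_i$: running the heat semigroup simultaneously on all $a_i$ and showing that a suitably normalized form of $\Lambda(a_1,\dots,a_m)/\prod_i\|a_i\|_{1/\theta_i}$ is monotone under this flow, one reduces the general estimate to the Gaussian case. For Gaussians, $\Lambda$ and its normalization become explicit determinantal functions of the $A_i$, and finiteness of the supremum over all admissible $A_i$ is equivalent to a linear-algebraic inequality that, via Lagrange duality or direct manipulation, reduces to precisely the conjunction of the scaling identity and the subspace inequalities over all $V \subset \reals^{k\ell}$.

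The principal obstacle is the heat-flow monotonicity step: one must differentiate a complicated Gaussian-normalized functional in time and exhibit pointwise positivity of the derivative, which is the technical heart of the BCCT argument and cannot be circumvented by appealing only to the linear-algebraic structure of $M$. The necessity direction and the passage from Gaussian finiteness to the combinatorial inequalities are, by contrast, comparatively routine once this step is in hand.
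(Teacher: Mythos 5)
The paper does not prove this statement at all: Theorem \ref{Bennett} is imported verbatim from Bennett, Carbery, Christ and Tao \cite{BCCT:2005}, and the authors explicitly remark that even the ``only if'' direction is included only for completeness (it is used in the corollary that follows, whose proof consists of specializing the subspace condition to subspaces of the form $\{(v_1x,\dots,v_kx)\}$). So there is no internal proof to compare yours against; the comparison is with the cited source. Your necessity argument is correct and is the standard one: the scaling computation does force $\ell\sum\theta_i=k\ell$, and the tube test functions $\chi_{T_i}$ do yield $\dim V\le\sum_i\theta_i\dim(f_i\cdot V)$ after matching powers of $\delta$ (one should note that the maps $x\mapsto f_i\cdot x$ are surjective onto $\reals^\ell$ whenever $f_i\ne 0$, which is what makes the exponent count $\|\chi_{T_i}\|_{1/\theta_i}\sim(R^{d_i}\delta^{\ell-d_i})^{\theta_i}$ legitimate).

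The sufficiency direction, however, is where your proposal stops being a proof and becomes a citation in disguise. You correctly identify that the entire content of the theorem lives in the step you defer --- either the heat-flow monotonicity computation showing Gaussians are extremal (Lieb's theorem, as reproved in the BCCT GAFA paper) followed by the analysis of when the Gaussian supremum is finite, or, in the preprint actually cited here, an induction on dimension that factors the inequality through critical subspaces and does not use heat flow at all. Writing ``one reduces the general estimate to the Gaussian case'' and ``finiteness \dots reduces to precisely the conjunction of the scaling identity and the subspace inequalities'' names the two hard lemmas without proving either; in particular the passage from finiteness of the Gaussian constant to the combinatorial conditions is itself a nontrivial piece of linear algebra, not the routine afterthought you describe. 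Since this paper treats the theorem as a black box, your sketch is an acceptable account of where the result comes from, but it should not be mistaken for a self-contained argument: the monotonicity (or critical-subspace induction) step is the theorem.
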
 

\begin{corollary} If the form in (\ref{GeneralForm}) satisfies the estimate
  (\ref{GeneralEstimate})  for $ \theta   \in [0,1 ] ^m$, then we have
  $\theta \in \Omega_M$.  
\end{corollary}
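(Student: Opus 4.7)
The plan is to invoke Theorem \ref{Bennett} as a black box and to derive the defining inequalities of the matroid polytope $\Omega_M$ by substituting well-chosen test subspaces $V \subset \reals^{k\ell}$ into its conclusion. By Edmonds' description of the basis matroid polytope, $\theta \in \Omega_M$ if and only if $\theta \in [0,1]^m$, $\sum_{i=1}^m \theta_i = k$, and $\sum_{i \in A} \theta_i \leq \rank(\{f_i : i \in A\})$ for every $A \subseteq \{1,\dots,m\}$. The first condition is hypothesized and the second is handed to us by Theorem \ref{Bennett}, so only the rank inequalities require work.

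I will fix $A \subseteq \{1,\dots,m\}$, set $W = \mspan(\{f_i : i \in A\}) \subset \reals^k$, and let $r = \dim W$. Identifying $\reals^{k\ell}$ with the space of $\ell \times k$ matrices $X$ whose columns are $x_1,\dots,x_k$ (so that $f_i \cdot x = X f_i$), the test subspace I will use is
$$
V = \{\, X : \text{every row of } X \text{ lies in } W^\perp \,\},
$$
which has $\dim V = \ell(k-r)$. For $f_i \in W$ one has $Xf_i = 0$ for all $X \in V$, so $\dim(f_i \cdot V) = 0$. For $f_i \notin W$, write $f_i = w + u$ with $w \in W$ and $u \in W^\perp \setminus \{0\}$; then any target $v = (v_1,\dots,v_\ell) \in \reals^\ell$ is hit by the matrix $X$ whose $j$-th row is $(v_j/\|u\|^2)u$, giving $\dim(f_i \cdot V) = \ell$.

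Plugging these dimensions into the subspace inequality from Theorem \ref{Bennett} produces
$$
\ell(k-r) \leq \ell \sum_{i \,:\, f_i \notin W} \theta_i = \ell \left( k - \sum_{i \,:\, f_i \in W} \theta_i \right),
$$
where the second equality invokes $\sum_i \theta_i = k$. Dividing by $\ell$ yields $\sum_{i \,:\, f_i \in W} \theta_i \leq r$, and since $A \subseteq \{i : f_i \in W\}$ with $\theta_i \geq 0$, we conclude $\sum_{i \in A} \theta_i \leq r = \rank(\{f_i : i \in A\})$. Combined with the two conditions already in hand, Edmonds' theorem then places $\theta$ in $\Omega_M$.

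The only real trick is guessing the test subspace $V$; once $V$ is chosen, reading off $\dim V$ and the dimensions $\dim(f_i \cdot V)$ is immediate, and what remains is bookkeeping against the standard polyhedral description of the basis matroid polytope.
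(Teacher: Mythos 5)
Your proof is correct and takes essentially the same route as the paper: both verify the rank inequalities of the matroid polytope by feeding Theorem \ref{Bennett} the test subspace annihilated by $\mspan(\{f_i : i\in A\})$ (your space of matrices with rows in $W^\perp$ is precisely the paper's ${\cal V}$ built from $V=S^\perp$), then use $\sum_i\theta_i=k$ to flip the resulting inequality. Your explicit computation of $\dim(f_i\cdot V)$ is just a more careful rendering of the same dimension count.
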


\begin{proof}  It is known that the matroid polytope  can be described
as the set of $ \theta \in [0,1]^m$ which lie in the hyperplane $ 
\{\theta : \sum _i \theta _ i =k\}$ and which satisfy the 
inequalities
\begin{equation}\label{Rank}
\sum _{ \{i: f_i \in S\}} \theta _i \leq \rank(  S) 
\end{equation}
for all subsets $S \subset M$. 
See the
textbook of J.  Lee \cite[p.~67]{MR2036957}, for example. 

Assume the form satisfies the estimate (\ref{GeneralEstimate})
for  $ \theta$. Let $ S \subset M$ and we will show the above
inequality. 
Towards this end, we let  $V$ be the
orthogonal complement of $S$,  $V = S^ \perp$. 
Let $ {\cal V} = \{ ( v_1x, v_2x, \dots, v_mx): v \in V, x\in \reals ^
\ell\}$ and thus ${\cal V } \subset \reals ^ { k \ell}$. 
From
Theorem \ref{Bennett} we have 
$$
\ell\dim(V)= \dim ({\cal V}) \leq \ell \sum _ { i =1 } ^m \theta _i \dim(f_ i \cdot V) 
= \ell \sum _ {\{  i : f_i \cdot V \neq \{ 0 \} \} } \theta _i
$$
Using that $ k= \sum _ { i=1 } ^ m  \theta _ i = \dim (V) + \dim ( V^
\perp)$ and we arrive at the inequality,
$$
\sum _ { \{i : f_ i \cdot V = \{ 0 \} \} }  \theta _i   \leq \dim ( V^
\perp).
$$
We observe that $ \dim ( V^ \perp) =  \rank ( S)$ and the Corollary
follows.
\end{proof}

We now consider an  extension of these estimtes  to the Lorentz
spaces. This relies on an interpolation theorem for multi-linear
operators of S.~Janson \cite{SJ:1986}. Janson's theorem is based on the real
method of  interpolation and thus gives us Lorentz spaces as
intermediate spaces.  

We develop the notation needed to state Janson's result. For $j
=1,\dots, m$, we let $ \bar A_j = ( A_{j0}, A_{ j1})$, $j =1 , \dots,
m$ and $ \bar B = (B_0,B_1)$ be Banach couples and then $A_{i\theta,q}
= [A_{j0}, A_{ j1}]_{\theta,q}$ will be the real interpolation
intermediate spaces.  We consider multi-linear operators 
$$
T: \prod _{ j =1 } ^ m A_{j0} \cap A_ {j1} \rightarrow B_0 + B_1. 
$$
We fix real numbers $\alpha_0,\alpha_1, \dots, \alpha_m$ with $ \alpha_i
\neq 0 $ for $ i=1,\dots,m$ and define 
\begin{eqnarray*}
\Omega = \{ ( \theta _1, \dots, \theta_m) \in [0,1]^m :  ( \alpha
_0 + \sum_{ i =1} ^ m  \alpha_i \theta _i ) \in [0,1]  \mbox{ and }
T: \prod _{ j =1 } ^ m A_{j\theta_j,q_j}  \rightarrow
B_{\theta,q}  \\
\mbox{ for some } q,q_1, \dots, q_m  \mbox{ in  }  (0,\infty]
\}.
\end{eqnarray*}
The main results  of Janson are  that the set $ \Omega$ is convex and 
that in the interior of $ \Omega$, $T$ is bounded  on real
interpolation spaces.

A simple application of Janson's results is the following theorem on
multi-linear forms.  This result  depends on the duality properties of
Lorentz spaces which  may be obtained, for example,  from the general result on
duality for real interpolation spaces in Bergh and L\"ofstrom 
\cite[Theorem 4.7.1]{BL:1976}. 
We will apply the next theorem  result not to the forms $ \Lambda$ but to forms
that are obtained by fixing some of the arguments of $ \Lambda$. Thus,
we state a result for more general multi-linear forms.

\begin{theorem} 
\label{Janson}
Let $ \Lambda $ be a multi-linear form which is
  defined at least on $ (L^ 1 ( \reals ^\ell) \cap L^ \infty (
  \reals^\ell))^m$ and suppose that 
$$
\Lambda (a_1, \dots, a_m)\leq A \prod_{i=1}^m \| a _ i \| _ { \eta _i}
$$
for all $ \eta$ in  $ B(\theta , \delta)\cap \{ \eta : \sum _ { i =1 }
^ m \eta_i  = K\}$ for some $K$  and $ B(\theta, \delta) \subset [0,1]^ m$.  Then for $(q_1,
\dots, q_m)$ satisfying $ \sum _ { i = 1 } ^ m \frac 1 { q_i} \geq 1$,
we conclude that 
$$
\Lambda ( a_1, \dots, a_m) \leq C \prod _ { i =1} ^ m \| a_i \|_{
  \theta _i , q_i}.
$$
The constant $C$ depends on $ \theta$, $(q_1, \dots, q_m)$, $m$, and $
\delta$. 
\end{theorem}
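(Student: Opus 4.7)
The plan is to apply Janson's real multi-linear interpolation theorem, as indicated in the paragraph preceding the statement. Since Janson's result is phrased for multi-linear \emph{operators} into a Banach couple rather than for forms, my first step is to dualize one variable: define $T(a_1,\ldots,a_{m-1})$ to be the linear functional $a_m \mapsto \Lambda(a_1,\ldots,a_m)$. The hypothesis then says precisely that $T$ maps $\prod_{j=1}^{m-1} L^{1/\eta_j}(\reals^\ell)$ boundedly into $L^{1/(1-\eta_m)}(\reals^\ell)$ for every $\eta \in B(\theta,\delta)\cap\{\eta:\sum_i \eta_i = K\}$, with operator norm uniformly bounded by $A$.

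Since this intersection is a relatively open, $(m-1)$-dimensional convex neighborhood of $\theta$ inside the hyperplane $\sum_i \eta_i = K$, one can select $m$ affinely independent endpoints $\eta^{(0)},\ldots,\eta^{(m-1)}$ whose convex hull contains $\theta$ in its relative interior. Taking the Lebesgue couples based at these endpoints as the Banach couples $\bar A_j$, and the corresponding dual pair as $\bar B$, the set $\Omega$ in Janson's setup contains all $m$ vertices of this simplex; by convexity of $\Omega$ it then contains $\theta$ in its relative interior. Janson's main conclusion gives boundedness of $T$ from the Lorentz product $\prod_{j=1}^{m-1} L^{1/\theta_j,q_j}$ into a Lorentz target $L^{1/(1-\theta_m),q}$, subject to the standard constraint $\sum_{j=1}^{m-1} 1/q_j \geq 1/q$ that appears in the real interpolation version of the result.

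To translate this back into a bound on $\Lambda$, I would invoke the duality of real interpolation spaces cited in the paragraph preceding the statement (Bergh--L\"ofstr\"om Theorem~4.7.1): the dual of $L^{1/(1-\theta_m),q}$ is $L^{1/\theta_m,q'}$ with $1/q + 1/q' = 1$, so setting $q_m = q'$ in Janson's constraint produces $\sum_{j=1}^{m-1} 1/q_j \geq 1 - 1/q_m$, i.e., $\sum_{j=1}^m 1/q_j \geq 1$, as claimed. I expect the main obstacle to be the careful bookkeeping of the affine parameter $\alpha_0 + \sum_i \alpha_i \theta_i \in [0,1]$ in Janson's framework: one must choose the $\alpha_i$ so that the homogeneity-type constraint $\sum_i \eta_i = K$ is encoded inside the interpolation data rather than imposed separately, and one must verify that the non-degeneracy requirement $\alpha_i \neq 0$ is compatible with the geometry of the simplex chosen inside $B(\theta,\delta)$.
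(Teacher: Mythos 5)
Your proposal is correct and follows essentially the same route as the paper: dualize one variable of $\Lambda$ to obtain an $(m-1)$-linear operator $T$, apply Janson's real interpolation theorem on the affine slice $\{\sum_i\eta_i=K\}$ of the ball $B(\theta,\delta)$, and then recover the bound on the form via Lorentz-space duality/H\"older, which converts Janson's constraint $\sum_{j\neq m}1/q_j\geq 1/q$ into $\sum_{j=1}^m 1/q_j\geq 1$. The only cosmetic difference is that the paper dualizes in the first variable (using $\theta_1>0$) rather than the last, and it does not spell out the bookkeeping of Janson's affine parameters that you flag at the end.
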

\begin{proof} Since we assume that $ \theta $ is an interior point of
  the cube $[0,1]^m$, we have, in particular, that $0 < \theta _1 $. We
  define an $(m-1)$-linear operator $T$ by 
$$
  \int_{\reals ^ \ell} T(a_2, \dots, a_m)a_1\, dx  = \Lambda(a_1, \dots, a_m).
$$
Our assumption on $ \Lambda$ implies that we have that
$$
T: \prod _ { i =2 } ^ m L^ { 1/\eta_i} ( \reals ^k) \rightarrow L^ {
  1/ ( 1- \eta _1)}( \reals ^ k ),\qquad  \eta \in \ball \theta \delta \cap
\{ \eta : \sum _ i \eta _i = K \}.
$$
  Our hypotheses allow us to apply
Theorem 2 from the article  of S.~Janson  \cite{SJ:1986} and  gives us
that for $ q, q_2, \dots, q_m$ in $ [1, \infty]$, we have 
$$
\| T(a_2, \dots, a_m) \| _ {  1-\theta _1,  q } 
\leq C \prod _ { i =2 } ^m \| a_ i \| _ { \theta _i , q_i }  
$$
provided $ \sum _ { i =2 } ^ m 1/q_i \geq 1/q$. 
Recalling our definition of the operator $T$ and the extension of
H\"older's inequality to the Lorentz spaces, we obtain the estimate of
the Theorem. 
\end{proof} 

\section{Estimates for the form $ \Lambda _n$}

The rest of this paper is devoted to the study of the form $ \Lambda
_n$ defined (\ref{MainFormDef}). We will realize this form as a special
  case of the form introduced in (\ref{GeneralForm})  where some of
the arguments $a_j$  are 
taken from Lorentz spaces.   

In this section  we consider the form
(\ref{GeneralForm}) where the functions $a_j$ live on the complex
plane, the number of functions is $4m+2$ and the vectors $ f_j$ lie in
$ \reals ^ { 2m+1}$ and are defined by 
\begin{eqnarray*}
f_{2j-1} & = & e_j , \qquad j =1, \dots, 2m+1 \\
f_ {2j} & = & e_j - e_ { j+1} , \qquad j = 1, \dots, 2m\\
f_{4m+2}& = & e_1-e_2+\dots+e_{2m+1}.
\end{eqnarray*}
The number of elements in our matroid is no longer $m$, but
$4m+2$. The vectors $f_j$ are elements of $ \reals ^ { 2m+1}$ and 
hence the parameter $k$ in  (\ref{GeneralForm}) is $ 2m+1$ and the parameter
 $\ell  =2$ as we have identified the complex plane  $ \complexes$ with $ \reals ^ 2$. 
We let $M= \{f_j : j =1, \dots,4m+2\}$ and then  $\Omega_M$
will be the matroid polytope for $M$ as introduced in section \ref{Lebesgue}. We
will show that the point $ (1/2, \dots, 1/2)$ lies in the interior of
the set $ \Omega _M$. This implies the desired estimate for the form,
but without the stated dependence of the constant. The argument below
is needed  to show that the constant in (\ref{MainFormEstimate}) is of
 the form $c^n$.

We introduce a set $P_\delta$ which  we will show  lies in
$\Omega_M$. For $ \delta >0$,  we let
\begin{eqnarray*}
P_\delta&  =  & \{ \theta \in \reals ^ { 4m+2} : \sum _ { i = 4j+1 } ^ { 4j+4}
\theta _i = 2, \mbox{ for } j=0,\dots,m-1,  \\
& & \qquad   \theta _ {4m+1} + \theta _{
  4m+2} = 1, \  | \theta _i -1/2| \leq \delta, \ i = 1,\dots,
4m+2\}. 
\end{eqnarray*}
\begin{theorem} \label{Inside}
If $ \delta \leq 1/10$, then $P_\delta \subset \Omega
  _M$.
\end{theorem}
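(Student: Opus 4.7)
The plan is to verify directly that every $\theta \in P_\delta$ satisfies the matroid polytope characterization from Lee mentioned above (\ref{Rank}): namely $\theta_i \in [0,1]$, $\sum_i \theta_i = 2m+1$, and $\sum_{f_i \in S} \theta_i \leq \mathop{\rm rank}(S)$ for every $S \subset M$. The first two are immediate: the box constraint follows from $|\theta_i - 1/2| \leq \delta \leq 1/10$, and adding the defining identities of $P_\delta$ gives $\sum_i \theta_i = 2m + 1 = k$.

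For the rank inequalities, I would exploit the block structure built into $P_\delta$. For each $S \subset M$ set $S_j = S \cap B_j$, where $B_j = \{4j+1, \ldots, 4j+4\}$ for $j<m$ and $B_m = \{4m+1, 4m+2\}$. Combining the direct estimate $\sum_{i \in S_j} \theta_i \leq |S_j|(1/2 + \delta)$ with the complementary estimate
\[
\sum_{i \in S_j} \theta_i = \frac{|B_j|}{2} - \sum_{i \in B_j \setminus S_j} \theta_i \leq \frac{|B_j|}{2} - (|B_j| - |S_j|)\left(\frac12 - \delta\right),
\]
(the block identities of $P_\delta$ give the first equality), one obtains
\[
\sum_{i \in S_j} \theta_i \leq \frac{|S_j|}{2} + \alpha_j \delta, \qquad \alpha_j := \min(|S_j|, |B_j| - |S_j|).
\]
Summing over $j$ and writing $d(S) = 2\mathop{\rm rank}(S) - |S|$, the desired matroid inequality reduces to the purely combinatorial bound $\delta \sum_j \alpha_j \leq d(S)/2$.

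The main obstacle is establishing $\sum_j \alpha_j \leq 5\, d(S)$, which combined with $\delta \leq 1/10$ yields the required inequality. A preliminary observation is that $|S| \leq 2\mathop{\rm rank}(S)$ with equality only when $S = M$ — and in that case every $|S_j|$ is maximal so all $\alpha_j = 0$ and the inequality holds trivially. For proper subsets one has $d(S) \geq 1$. The bound is then proved by case analysis on the intersection patterns $S_j \subset B_j$: classify each block as empty, partial, or full, and track how consecutive blocks $B_j$ and $B_{j+1}$ share the direction $e_{2j+3}$, which forces a rank collapse precisely when both blocks contain a partial configuration compatible with that shared vector. The extremal configurations are the straddling paths such as $\{f_{4j+3}, f_{4j+4}, f_{4j+5}\} = \{e_{2j+2}, e_{2j+2} - e_{2j+3}, e_{2j+3}\}$, which realize $\sum_j \alpha_j = 3$ with $d(S) = 1$; verifying that no other configuration exceeds the ratio $5$ completes the argument.
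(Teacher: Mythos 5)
Your reduction is correct and is packaged differently from the paper's argument: you bound $\sum_{f_i\in S}\theta_i$ block by block, obtaining $\sum_{f_i\in S}\theta_i\leq |S|/2+\delta\sum_j\alpha_j$ with $\alpha_j=\min(|S_j|,|B_j|-|S_j|)$, so that the rank inequalities (\ref{Rank}) reduce to the single combinatorial statement $\delta\sum_j\alpha_j\leq \rank(S)-|S|/2$. The arithmetic here is right, and the statement you need, $\sum_j\alpha_j\leq 5\,d(S)$ with $d(S)=2\rank(S)-|S|$, is in fact true (the extremal ratio is $3$, attained by intervals such as $\{e_{2j+2},\,e_{2j+2}-e_{2j+3},\,e_{2j+3}\}$, which is consistent with the constant $1/6$ that appears in the paper's intermediate estimates). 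The paper instead reduces to closed sets, decomposes them into intervals plus an independent set (Lemmas \ref{FindTriples} and \ref{Structure}, Proposition \ref{FullSet}), and estimates each interval directly (Lemma \ref{SegEst}); your route avoids the reduction to closed sets but must therefore control every subset $S\subset M$.

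The genuine gap is that the inequality $\sum_j\alpha_j\leq 5\,d(S)$ is where the entire content of Theorem \ref{Inside} lives, and you do not prove it. Two specific points. First, you assert $|S|\leq 2\rank(S)$ ``with equality only when $S=M$'' without justification; this is exactly the statement that $(1/2,\dots,1/2)\in\Omega_M$, which is the nontrivial core of the theorem and cannot be taken as a ``preliminary observation.'' Second, the proposed verification --- ``classify each block as empty, partial, or full, and track how consecutive blocks share the direction $e_{2j+3}$'' --- is not a proof: there are $2^{4m+2}$ subsets, the quantities $\alpha_j$ are not additive over the connected pieces of $S$ (two pieces meeting the same block interact), and the vector $f_{4m+2}=e_1-e_2+\cdots+e_{2m+1}$ does not fit the block-adjacency picture at all, since it can become dependent only on sets that span all of $\reals^{2m+1}$. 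To close the gap you need a structural lemma reducing arbitrary $S$ to a disjoint union of intervals, an independent remainder, and possibly $f_{4m+2}$ --- which is precisely what the paper's Lemmas \ref{FindTriples}--\ref{Structure} and Proposition \ref{FullSet} supply --- and then the bound $\sum_j\alpha_j\leq 3$ per interval together with $\sum_j\alpha_j\leq |M|-|S|=d(S)$ for spanning sets. As written, the proposal defers the whole combinatorial argument to an unproven claim.
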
 

The proof begins with a few technical lemmata. In the following
discussion, we will let   $ B_k = \{ f_{ 4k+1} ,
\dots , f_{ 4k+4}\}$, for $k =0, \dots, m-1$ denote a block of 4 vectors. In addition, it will be useful to view the set
$M$ as an ordered set and for $j\leq k$, let $[f_j, f_k ] = \{ f_i : j
\leq i \leq k\}$ denote an interval in $M$. 
\begin{lemma} \label{SegEst}
 If $ S= [ f_{ 2k-1}, f _ { 2k+2j -1}] = [e_k , e_{
      k+j}]$ is an interval in $M$, then we have
$$
\rank (S) \geq   1/2 -3 \delta + \sum _ {f _i \in S } \theta _i  .
$$
\end{lemma}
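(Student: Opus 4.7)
The plan is to bound $\sum_{f_i \in S}\theta_i$ using the block-sum constraints $\sum_{i=4\ell+1}^{4\ell+4}\theta_i = 2$ that define $P_\delta$, rather than just the pointwise bound $\theta_i \le 1/2 + \delta$. First I observe that $S$ consists of the $2j+1$ consecutive vectors $f_{2k-1}, f_{2k}, \ldots, f_{2k+2j-1}$, namely $e_k, e_k-e_{k+1}, e_{k+1}, \ldots, e_{k+j}$, and these span $\{e_k, \ldots, e_{k+j}\}$. Hence $\rank(S) = j+1$, and the desired inequality is equivalent to $\sum_{f_i \in S}\theta_i \le j + 1/2 + 3\delta$.

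Next I decompose $S$ according to the block structure. Since both the starting index $2k-1$ and the ending index $2k+2j-1$ are odd, there are four cases based on the values of these indices modulo $4$. In each case, $S$ is a disjoint union of some number of complete blocks $B_\ell$ (each contributing exactly $2$ to the sum) together with at most one partial block at each endpoint. For a partial block $P \subset B_\ell$, the key identity is
$$
\sum_{f_i \in P}\theta_i \;=\; 2 \;-\; \sum_{f_i \in B_\ell\setminus P}\theta_i \;\le\; 2 - |B_\ell\setminus P|\bigl(\tfrac{1}{2} - \delta\bigr),
$$
so each missing entry on the complement side contributes $\delta$ to the slack.

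The tightest case is when $2k-1 \equiv 3 \pmod 4$: the start of $S$ lies at the third position of some $B_a$, so $\theta_{2k-1} + \theta_{2k} = 2 - \theta_{4a+1} - \theta_{4a+2} \le 1 + 2\delta$, contributing $2\delta$ of slack. An additional $\delta$ comes from the other endpoint, either via the direct pointwise bound on an isolated $f_{4b+1}$ (Case III) or via the analogous partial-block identity $\theta_{4b+1}+\theta_{4b+2}+\theta_{4b+3} = 2 - \theta_{4b+4} \le 3/2 + \delta$ (Case IV). Summing yields $\sum_{f_i \in S}\theta_i \le j + 1/2 + 3\delta$, as required. The edge case in which $S$ reaches $f_{4m+1}$ (outside every $B_\ell$) is handled by the direct pointwise bound $\theta_{4m+1} \le 1/2 + \delta$ and gives no worse a contribution.

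The main obstacle is simply the case bookkeeping: I need to check that the small-$j$ configurations (a single vector, or $S$ containing no complete block) also fit the same estimate, and that the two remaining mod-$4$ cases (where $2k-1 \equiv 1$) in fact give a stronger bound of $j + 1/2 + \delta$, so the worst case is genuinely $3\delta$.
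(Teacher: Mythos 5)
Your proposal is correct and follows essentially the same route as the paper: both compute $\rank(S)=j+1$ and then bound $\sum_{f_i\in S}\theta_i$ by splitting $S$ into complete blocks (each contributing exactly $2$) plus partial blocks at the two endpoints, controlled via the block-sum identities and the pointwise bound $\theta_i\le 1/2+\delta$; your classification by the residues of the endpoints mod $4$ is the same as the paper's four cases on the parities of $k$ and $j$. The only (immaterial) difference is that in the case of a trailing three-element partial block you use the identity $\theta_{4b+1}+\theta_{4b+2}+\theta_{4b+3}\le 3/2+\delta$ where the paper uses three pointwise bounds, which gives you a slightly sharper slack of $\delta$ instead of $3\delta$ there.
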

\begin{proof} The proof proceeds by considering the four cases that
  arise when $k$ and $j$ are even and odd. 

{\em Case 1. } Let  $k$ be even and $j$ be even. 

In this case, $ S= \{ e_k, e_k -e_{ k+1} \} \cup ( \cup _ { i =k/2}
^ { (k+j)/2 -2} B_i )\cup \{ e_ { k+j-1}, e_ { k+j -1} - e_ { k+j } ,
e_  {k+j}\}$. 
It is clear that the (vector space)  span of $S$ is the subspace spanned by $ e_k, e_
{ k+1}, \dots, e_ {k+j}$ and thus $ \rank(S) = j+1$. We now consider
$ \sum _{ f_i \in S} \theta _i$. Each block contributes $ 2$ to the
sum. From the definition of $P_\delta$, we have $ \theta _ { 2k-1 } +
\theta _{ 2k}  \leq 1 + 2\delta$. Again, from the definition of
$P_\delta$, we have that 
$\theta _ { 2k+2j-3 }  + \theta _ { 2k+2j-2 }  + \theta _ { 2k+2j-1
}= 2  - \theta _ { 2( k+j)} \leq 3/2 +\delta$.  Thus, we have
$$
\sum _ { f_i \in S} \theta _i \leq  ( j+1 ) - 1/2 + 3 \delta.
$$
The conclusion of the lemma follows from this upper bound and the 
observation that $ \rank(S) = j+1$. 

{\em Case 2. } Let $k$ be even and $j$ be odd. 

We have $ S = \{e_k, e_k - e_ { k+1} \} \cup   ( \cup _ { i = k/2 }
^ { ( k+j -3)/2} B _ i )\cup \{ e_ { k+j} \} 
$
and again $ \rank(S) = j+1$. We have
$(j-1)/2$ blocks in $S$. If $ \theta \in P_\delta$,  we may use   the
upper bound of $1/2 + \delta$ 
for  the $\theta_i$ that do not correspond to blocks and obtain that
$$
\sum _ { f_i \in S} \theta _i \leq  ( j+1 ) - 1/2 + 3 \delta.
$$

{\em Case 3. } Let $k$ be odd and $j$ be even. 

In this case   we have $ S = (\cup _ { i = (k-1)/2} ^ { ( k+j-3)/2} B_i
) \cup \{ e_ { k+j}\}$.   As we have $j/2$ blocks and one extra vector,
it is easy to obtain the upper bound
$$
\sum _ { f_i \in S} \theta _i \leq  ( j+1 ) - 1/2 +  \delta.
$$
As $\rank(S) = j+1$, the estimate of the Lemma follows.

{\em Case 4.} Let $k$ be odd and $j$ be odd. 

In this case  we have 
$$ S = ( \cup _ { i = (k-1)/2 } ^ { ( k+j)/2 -2} B_i ) \cup \{ e _ { k
  + j -1} , e _ { k+j-1} - e_ { k+j} , e _ { k+j} \} .
$$
We have $(j-1)/2$ blocks and three extra vectors, thus we have
$$
\sum _ { f_i \in S} \theta _i \leq  ( j+1 ) - 1/2 + 3 \delta. 
$$
As $\rank(S) = j +1$, the estimate follows again. 
\end{proof}

\begin{lemma}  
\label{FindTriples}
Let $S \subset M \setminus \{ e_1-e_2 + e_3-e_4+\dots e_
  { 2m+1}\}$ be a dependent set. Suppose that $ \mspan (S) =
  S$, then we have that $S$ contains a set of  the form $ \{ e_k , e_k
  -e_{ k+1} , e_ { k+1}\}$ for some $k$. 
\end{lemma}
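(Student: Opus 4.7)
The plan is to identify $M\setminus\{f_{4m+2}\}$ with a graphic matroid on a small concrete graph, classify the circuits of that submatroid, and then deduce the required triple using the closure hypothesis $\mspan(S)=S$.

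First I would form the graph $G$ on the vertex set $\{0,1,\dots,2m+1\}$ with edges $(0,j)$ for $j=1,\dots,2m+1$ and edges $(j,j+1)$ for $j=1,\dots,2m$. The correspondences $(0,j)\mapsto e_j$ and $(j,j+1)\mapsto e_j-e_{j+1}$ identify $M\setminus\{f_{4m+2}\}$ with the columns of the reduced signed incidence matrix of $G$ (grounded at vertex $0$), so linear dependence among a subset of $M\setminus\{f_{4m+2}\}$ corresponds exactly to the presence of a cycle in the matching set of edges. The subgraph on $\{1,\dots,2m+1\}$ is a simple path and hence a tree, so every simple cycle of $G$ must pass through vertex $0$. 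It follows that every circuit of this submatroid has the form
\begin{equation*}
C_{k,j}=\{e_k,\ e_k-e_{k+1},\ e_{k+1}-e_{k+2},\ \dots,\ e_{j-1}-e_j,\ e_j\},\qquad 1\le k<j\le 2m+1.
\end{equation*}

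Because $S\subset M\setminus\{f_{4m+2}\}$ is dependent, it contains some circuit $C_{k,j}$. If $j=k+1$ then $C_{k,k+1}=\{e_k,e_k-e_{k+1},e_{k+1}\}$ is already the triple we need. Otherwise the identity $e_{k+1}=e_k-(e_k-e_{k+1})$ places $e_{k+1}$ in the vector span of $C_{k,j}\subset S$, so $e_{k+1}\in\mspan(S)$; the hypothesis $\mspan(S)=S$ then forces $e_{k+1}\in S$. Together with $e_k$ and $e_k-e_{k+1}$, which already lie in $C_{k,j}\subset S$, this produces the triple $\{e_k,e_k-e_{k+1},e_{k+1}\}\subset S$.

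The substantive step is the circuit classification, which falls out immediately once the graphic-matroid picture is in place; the closure hypothesis is then used exactly once to trim a possibly long circuit down to a triangle. I do not anticipate any real obstacle, and the exclusion of $f_{4m+2}$ from $S$ is used only to guarantee that every circuit inside $S$ comes from the graphic submatroid just described.
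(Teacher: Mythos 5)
Your proof is correct. It is close in spirit to the paper's but organized differently: the paper argues by contraposition, observing that if $S$ contained no triple then (since any two members of $T_k=\{e_k,\,e_k-e_{k+1},\,e_{k+1}\}$ span the third, and $\mspan(S)=S$) $S$ could meet each $T_k$ in at most one element, and then asserting that such a set is independent, contradicting the dependence of $S$. You instead run the argument forward: you identify $M\setminus\{e_1-e_2+\dots+e_{2m+1}\}$ with the graphic matroid of a graph whose part away from the apex vertex $0$ is a path, classify every circuit as a set $C_{k,j}$, extract a circuit from the dependent set $S$, and invoke the closure hypothesis exactly once to pass from $\{e_k,\,e_k-e_{k+1}\}\subset C_{k,j}$ to $e_{k+1}\in S$. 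Both proofs turn on the same structural fact --- any dependence forces two members of some $T_k$ into $S$ --- but your circuit classification supplies the justification for the step the paper leaves implicit (why a set meeting each $T_k$ at most once must be independent), at the modest cost of introducing the graphic-matroid picture. Your closing remarks are also accurate: the exclusion of $e_1-e_2+\dots+e_{2m+1}$ is genuinely needed, since with that vector present there are circuits (e.g.\ $\{e_1-e_2+\dots+e_{2m+1},\,e_1,\,e_2-e_3,\,e_4-e_5,\dots\}$) containing no pair from any $T_k$.
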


\begin{proof} 
Suppose that $S$  contains no  set of the form $ \{ e_k , e_k -e_{
  k+1}, e_ { k+1}\}$. Because $ \mspan(S) =S$, it follows that $S$
contains at most one element from each the sets $ \{ e_k, e_k - e_{
  k+1} , e_ { k+1}\}$, $k=1,\dots,2m$.  This  contradicts our  assumption  that $S$ is
a dependent set.  
\end{proof}

\begin{lemma}  
\label{Structure}
Let $ S \subset M \setminus \{ e_1-e_2+\dots + e_ {
    2n+1}\}$. If $ \mspan(S) =S$, then we may write 
$$
S =  \bigcup _ { i=0} ^ k S_i 
$$
where the collection $\{ S_i\}$ is pairwise  disjoint, for each
$i=1,\dots, k$, $S_i = [e_{s_i}, e_{ t_i}]$ is an interval  and  the set
$ S_0$ is independent.  For this decomposition, we have
$$
\sum _ { i = 0 } ^ k \rank(S_i) = \rank(S). 
$$
\end{lemma}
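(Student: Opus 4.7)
My plan is to construct the decomposition explicitly. Let $S_1,\dots,S_k$ be the inclusion-maximal proper intervals $[e_{s_i},e_{t_i}]$ (with $s_i<t_i$) that lie entirely in $S$, in the sense that every one of the vectors $e_{s_i}, e_{s_i}-e_{s_i+1}, e_{s_i+1}, \dots, e_{t_i-1}-e_{t_i}, e_{t_i}$ belongs to $S$, and set $S_0:=S\setminus\bigcup_{i\geq 1}S_i$. I would then check, in order, pairwise disjointness of the $S_i$, independence of $S_0$, and additivity of rank.

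Pairwise disjointness is purely combinatorial and does not use the closure hypothesis: if two maximal full intervals overlap in any index, or sit contiguously across a bond $e_t-e_{t+1}\in S$, one produces a strictly larger full interval in $S$, contradicting maximality. Thus any two $S_i$, $S_j$ with $i\neq j$ in $\{1,\dots,k\}$ are disjoint as subsets of $M$.

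Independence of $S_0$ is where $\mspan(S)=S$ enters. The characterization I would establish is: if $e_j-e_{j+1}\in S_0$ then $e_j, e_{j+1}\notin S$, because otherwise one of the identities $e_{j+1}=e_j-(e_j-e_{j+1})$ or $e_j=(e_j-e_{j+1})+e_{j+1}$, together with closure, would place the missing vector into $S$ and produce a triple $\{e_j,e_j-e_{j+1},e_{j+1}\}\subset S$ that would swallow $e_j-e_{j+1}$ into one of the $S_i$. Symmetrically, if $e_j\in S_0$ then neither $e_{j-1}-e_j$ nor $e_j-e_{j+1}$ lies in $S$. Consequently the coordinates touched by the vertical vectors of $S_0$ are disjoint from those touched by its horizontal vectors, and on each disjoint coordinate group linear independence is immediate: the horizontal vectors form a sub-forest of the path on $\{1,\dots,2m+1\}$, while the vertical vectors are distinct standard basis vectors.

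The same coordinate-support analysis delivers the rank identity. For $i\geq 1$ we have $\mspan(S_i)=\mspan(e_{s_i},\dots,e_{t_i})$, and $\mspan(S_0)$ is contained in the coordinate subspace indexed by the remaining coordinates. Since these coordinate supports are pairwise disjoint, the relevant subspaces of $\reals^{2m+1}$ are in direct sum position, so the span of $S$ is their internal direct sum, giving $\rank(S)=\rank(S_0)+\sum_{i=1}^k\rank(S_i)$. The main obstacle I anticipate is the closure-based characterization of $S_0$ in step two; Lemma \ref{FindTriples} serves only to guarantee that $k\geq 1$ whenever $S$ is dependent and so is not essential to the construction itself.
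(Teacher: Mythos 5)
Your proof is correct and takes essentially the same route as the paper: both decompose $S$ into its maximal full intervals $[e_{s_i},e_{t_i}]$ plus an independent remainder, with the closure hypothesis $\mspan(S)=S$ doing the work of showing that any $e_j$ or bond $e_j-e_{j+1}$ adjacent to a triple gets absorbed into an interval. The difference is only organizational: the paper peels off the intervals one at a time starting from a dependent triple supplied by Lemma \ref{FindTriples}, whereas you define all the maximal full intervals at once and then supply the coordinate-support arguments for the independence of $S_0$ and for rank additivity that the paper dismisses as clear.
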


\begin{proof} If $S$ is linearly dependent, then by Lemma
  \ref{FindTriples}, we may find an index $k$ so that $ \{ e_k, e_k -
  e_{ k+1}, e_ {k+1}\}$ lies in $S$. Since $\mspan(S) = S$, if $ 
 \{ e_k, e_k -  e_{ k+1}, e_ {k+1}\}  
\subset S$   and $ e_ { k-1}$ lies in $S$, then  $e_{k-1}-e_k$ also
lies in $S$. Similarly, either  $ e_ {k+1} $ and $ e_ {k+1}-e_{ k+2}$
both lie  in $S$ 
or both do not lie in $S$. 
We let $ S_1$ be the maximal interval  of the form $[e_s,e_t]$ which
contains $\{ e_k, e_k-e_{ k+1}, e_{ k+1}\}$. 
It is clear that we have $\rank(S) = \rank(S_1) + \rank ( S 
\setminus S_1)$. 
If $ S\setminus S_1$ is
dependent, then we repeat the above argument to find a interval 
$S_2$. We continue until $ S \setminus ( \cup S_i) $ is independent
and then name this set $S_0$.   It is clear that we have the rank of
$S$ is the sum of the ranks  of the subsets. 
\end{proof}

\begin{proposition} 
\label{FullSet}
Suppose that $ \mspan (S) = S$,  $ e_1-e_2+\dots
  + e_ { 2m+1} \in S$ and 
$$
e_1 -e_2 + \dots + e_ { 2m+1} \in \mspan ( S\setminus \{ e_1-e_2+ \dots
+ e_ { 2m+1}\}).
$$
If $ S \setminus \{ e _1 -e_2 +\dots e_{2 m+1}\}  = \cup _i S_i$ and each
$S_i$ is an interval of the form $[e_s,e_t]$, then $S=B$. 
\end{proposition}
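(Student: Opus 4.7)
The plan is to show that every element of $M$ must lie in $S$, so that $S$ coincides with the full matroid (which is what I take the conclusion $S = B$ to mean). Write $v_0 = e_1 - e_2 + \cdots + e_{2m+1}$. My first step is to extract from the hypothesis $v_0 \in \mspan(S \setminus \{v_0\})$ the combinatorial consequence that the integer ranges of the intervals $S_i = [e_{s_i}, e_{t_i}]$ cover $\{1, 2, \ldots, 2m+1\}$. Indeed, the $\reals$-linear span of a single interval $[e_{s_i}, e_{t_i}]$ is $\mspan_\reals\{e_{s_i}, \ldots, e_{t_i}\}$, so the span of $\bigcup_i S_i$ equals $\mspan_\reals\{e_j : j \in \bigcup_i [s_i, t_i]\}$. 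Since every coordinate of $v_0$ is $\pm 1$, matching the $e_j$-coefficient for each $j$ forces $\bigcup_i [s_i, t_i] = \{1, \ldots, 2m+1\}$.

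The second step is to verify that every vector in $M \setminus \{v_0\}$ already lies in $S$. For $f_{2j-1} = e_j$, the coverage from Step 1 gives $j \in [s_i, t_i]$ for some $i$, hence $e_j \in [e_{s_i}, e_{t_i}] \subset S$. For $f_{2j} = e_j - e_{j+1}$ with $1 \leq j \leq 2m$ there are two sub-cases. If both $j$ and $j+1$ lie in a single range $[s_i, t_i]$, then $f_{2j}$ is literally one of the vectors listed in $[e_{s_i}, e_{t_i}]$ and hence lies in $S$. Otherwise $j$ and $j+1$ lie only in distinct intervals; then $e_j$ and $e_{j+1}$ separately belong to $S$, so $e_j - e_{j+1}$ lies in the $\reals$-span of $S$, and the hypothesis $\mspan(S) = S$ forces $f_{2j} \in S$.

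Combining these yields $M \setminus \{v_0\} \subset S$; since $v_0 \in S$ by assumption, $S = M$. The one conceptual point to get right — and the main obstacle to a careless proof — is recognising that the $\mspan(S) = S$ hypothesis is exactly what is needed to bridge the gap between two abutting intervals whose index ranges meet at consecutive integers. Without it, a configuration like $S \setminus \{v_0\} = [e_1, e_3] \cup [e_4, e_{2m+1}]$ would satisfy the coverage condition of Step 1 yet omit $f_6 = e_3 - e_4$, and the proposition would fail.
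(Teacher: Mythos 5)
Your proof is correct and takes essentially the same approach as the paper's: the paper runs the same two observations in contrapositive form (if some $e_j\notin S$ then, because each $S_i$ is an interval $[e_s,e_t]$, no element of $S\setminus\{e_1-e_2+\dots+e_{2m+1}\}$ has a nonzero $e_j$-component, contradicting the span hypothesis; and $\mspan(S)=S$ then forces the difference vectors $e_k-e_{k+1}$ into $S$), which matches your coverage step and your two sub-cases for $f_{2j}$. You are also right to read the conclusion ``$S=B$'' as $S=M$.
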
 

\begin{proof} Since $ \mspan (S) =S$, if $ e_k -e _ {k+1} \not \in S$,
  then also $ e_k \not \in S$ or $ e_ { k+1} \not \in S$. If $ e_j$ is
  not in $ S$, then we have that $e_{j-1} -e_j$ and $e_j - e_ { j+1}$
  are not in $S_i$ for any $i$. But this implies that no vector in $S
  \setminus \{ e_ 1- e_2+ \dots+ e _ { 2m+1} \}$ has a non-zero $e_j$
  component and thus $e_1 -e_2 + \dots + e _ { 2m+1} $ is not in $
  \mspan (S\setminus \{ e_1-e_2+ \dots + e_ { 2m+1}\})$. 
\end{proof}

We are ready to give the proof of  our   Theorem. 

\begin{proof}[Proof of Theorem \ref{Inside}]
To show $ P_\delta \subset \Omega_M$, we use  the
characterization of the matroid polytope by the inequalities 
in (\ref{Rank}). 
Note that it suffices to  consider these inequalities for sets which
satisfy $ \mspan (S) = S$. 

We begin by considering sets $S \subset M \setminus \{ e_1 -e_2 +
\dots + e_ { 2m+1} \}$. By Lemma \ref{Structure}, we may write  $ S  =
 \cup _ { i=0 }^ k  S_i $ where the set $S_0$ is independent and each
 $ S_i$ is an interval  of the form $ [e_s, e_t]$.  
We let $L$ denote the cardinality of $S_0$. 
Using Lemma \ref{Structure} and then Lemma \ref{SegEst} for each of the
intervals in this decomposition,  we obtain 
\begin{eqnarray*}
\rank (S) & = & \sum _ { i = 0 } ^ k \rank (S_i) \\
  &  \geq &  \rank(S_0) + 
 k ( 1/2 -3 \delta ) + \sum  _ { i=1 } ^ k \sum _ { f_j \in S_ i }
 \theta _j\\
& \geq  & L( 1/2 -\delta ) + k ( 1/2-3\delta ) + \sum _ { f_j \in S}
\theta _j. 
\end{eqnarray*}
In the last inequality, we use that $S_0$ is independent and each $
\theta _j \leq 1/2 +\delta$. 
From this,  it is clear that  we have the inequality  (\ref{Rank})
when 
$ \delta \leq 1/6$. 

Now we consider the case when $  e_1-e_2+ \dots + e_ { 2m+1} \in S$
and thus we write 
$$
S = S' \cup \{ e_1-e_2+\dots+ e_ {2m+1}\}.
$$
If $ \{ e_1-e_2+ \dots e_ {2m+1}\} \not \in \mspan ( S')$, then we have 
$$
\rank(S') \geq \sum  _ {f_i \in S'} \theta _i 
$$
by the previous case the estimate (\ref{Rank}) for $S$ follows
since $\rank (S) = 1 + \rank(S') \geq \theta _{4m+2} + \rank (S')$.

Finally, we consider the case when $  e_1-e_2+ \dots + e_ { 2m+1} \in
\mspan(S')$. In this case, we write $ S'= \cup _ { i=0}^ k S_i$ as in
Lemma \ref{Structure}. 
 Using Lemma \ref{SegEst} we obtain 
$$
\rank(S) = \rank(S') \geq \sum _ { f_i \in S' } \theta _i + k ( 1/2 -3
\delta) + L ( 1/2-\delta). 
$$
If $L =0$, then $S = M$ by Proposition \ref{FullSet}  and thus we have
$\rank(S) = \sum _ {f_i \in S} \theta _i = 2m+1$ from the definition
of $P_ \delta$. 

In the remaining cases, we want $k ( 1/2-3 \delta) + L ( 1/2-\delta )
\geq \theta _ { 4m+2}$ which is implied by  
\begin{equation} \label{LastStep}
k ( 1/2- 3\delta ) + L ( 1/2- \delta ) \geq 1/2 + \delta. 
\end{equation}
If $ L =1$, then $ k \geq 1$ as otherwise $S'$ contains only one vector
and we cannot have $ e_1-e_2 + \dots + e_ { 2m+1}  \in \mspan(S
')$. If $ L =k=1$, then we have (\ref{LastStep}) if $\delta \leq
1/10$. If $k \geq 2$, then we have (\ref{LastStep}) if $ \delta \leq
1/6$. 
\end{proof} 

In order to apply Corollary \ref{LebesgueBound} to the form
associated to the matroid $M$, we will need to
compute the determinants arising in Proposition \ref{BaseBound} for
the matroid $M$.  
\begin{lemma} \label{Determinants}
 Let $ B\subset M$ be a basis and let $ \hat B$ be the
  matrix whose rows are the vectors in $B$. We have $|\det \hat B |
  =1$. 
\end{lemma}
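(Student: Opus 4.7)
The plan is to split into two cases depending on whether $f_{4m+2} \in B$. If $f_{4m+2} \notin B$, then $B$ consists of $2m+1$ vectors from $\{e_j\}_{j=1}^{2m+1} \cup \{e_j - e_{j+1}\}_{j=1}^{2m}$. I view these as columns of the reduced signed vertex-edge incidence matrix of the graph $G$ on vertex set $\{0,1,\ldots,2m+1\}$, where $e_j$ corresponds to the edge $\{0,j\}$ and $e_j - e_{j+1}$ to $\{j,j+1\}$, with vertex $0$ omitted as the root. Such reduced incidence matrices are classically totally unimodular; since $B$ is a basis of $\reals^{2m+1}$, the corresponding $2m+1$ edges form a spanning tree of $G$, and $|\det \hat B| = 1$.

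If instead $f_{4m+2} \in B$, write $B = B' \cup \{f_{4m+2}\}$ with $|B'|=2m$. Let $V$ be the matrix whose rows are the elements of $B'$ and define $w \in \reals^{2m+1}$ by $w_j = (-1)^j \det V^{(j)}$, where $V^{(j)}$ denotes $V$ with its $j$th column deleted. A Laplace expansion along the $f_{4m+2}$-row yields $|\det \hat B| = |f_{4m+2}\cdot w|$, while the identity $Vw=0$ identifies $w$ (up to sign) with a generator of the orthogonal complement of the row space of $V$. Total unimodularity from the first case puts $w \in \{-1,0,1\}^{2m+1}$. Moreover, $Vw=0$ forces $w_j=0$ whenever $e_j \in B'$ and $w_j = w_{j+1}$ whenever $e_j - e_{j+1} \in B'$; hence $w$ is constant on the connected components of the subgraph $H \subseteq G$ whose edges correspond to $B'$, and vanishes on the component containing $0$. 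An edge count shows $H$ has exactly two components $C_0 \ni 0$ and $C_1$, and since $C_1$ has no edges incident to $0$, it must be a set of consecutive integers $\{s,s+1,\ldots,t\}$. Then $|f_{4m+2}\cdot w| = |\sum_{j=s}^t (-1)^{j+1}|$, which equals $1$ when $|C_1|$ is odd and $0$ when $|C_1|$ is even. Even parity would give $f_{4m+2} \in \mspan(B')$, contradicting that $B$ is a basis, so $|C_1|$ is odd and $|\det \hat B|=1$.

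The main technical step is the second case, in which one must recognize that $|\det \hat B|$ equals $|f_{4m+2}\cdot w|$ for the cofactor vector $w$, and that $w$ inherits enough combinatorial structure from $B'$ to reduce the problem to a parity statement about one component of the small graph $H$. Once this framing is in place, the basis hypothesis on $B$ automatically supplies the parity needed to force the alternating sum to equal $\pm 1$ rather than $0$.
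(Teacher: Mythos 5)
Your proof is correct, and it takes a genuinely different route from the paper's. The paper orders the vectors of $B$ greedily according to a priority list, verifies (by a contradiction argument) that this ordering exhausts $B$, and then triangularizes $\hat B$ by explicit column operations (adding column $i$ to column $i+1$), handling the case $f_{4m+2}\in B$ via a $2\times 2$ block-triangular decomposition whose diagonal blocks each reduce to triangular form with $\pm 1$ diagonal. You instead observe that $M\setminus\{f_{4m+2}\}$ is (the transpose of) the reduced incidence matrix of a graph rooted at an auxiliary vertex $0$, so total unimodularity disposes of the case $f_{4m+2}\notin B$ immediately; for the remaining case you expand the determinant along the $f_{4m+2}$ row, identify the cofactor vector $w$ with the generator of $\ker V$, use TU to put $w\in\{-1,0,1\}^{2m+1}$, and use the forest structure of $B'$ (a two-component forest, the non-root component being an interval $\{s,\dots,t\}$) to reduce everything to the parity of $|C_1|$, with the basis hypothesis ruling out even parity since $f_{4m+2}\cdot w=0$ would force $f_{4m+2}\in\mspan(B')$. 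All the steps check out: the cofactor identity $Vw=0$, the one-dimensionality of $\ker V$, and the edge count $2m$ edges on $2m+2$ vertices giving exactly two components are each standard and correctly applied. What each approach buys: yours imports classical total-unimodularity and graphic-matroid facts to replace the paper's most delicate step (showing the greedy selection never stalls), and it explains structurally \emph{why} the alternating vector contributes $\pm 1$ (odd interval length); the paper's argument is self-contained elementary linear algebra with no appeal to incidence matrices or TU theory. As a minor aside, the paper's own proof ends with the typo ``the determinant of the matrix $\hat B=0$'' where $\pm 1$ is meant; your version has no such slip.
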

\begin{proof} 
We begin by ordering the vectors in $B$ in the following way. We let
$f_{j_1}$ be the first vector on the list $ e_1, e_1-e_2, e_1-e_2+
\dots + e_{ 2m+1}$ that appears in $B$. Now given $ f_{j_1}, \dots,
f_{j _k}$, we choose $ f_{j _ { k+1}}$ to be the first vector on the
list $e_{ k}-e_{k+1}, e_{k+1} , e_{ k+1} -e_ {k+2}, e_1-e_2+ \dots + e_ { 2m+1}$
that is an element in the set $B \setminus \{ f_{j_1}, \dots, f_
{j_k}\}$.  We claim that this procedure continues until all of the
vectors in $B$ have been chosen. 

To establish the claim, we argue by contradiction. Suppose that for
some $k$, there
is no choice for $ f_{ j _{k+1}}$. We claim that   $ B
\setminus \{ f_{ j_1} , \dots, f_ {j _k }\}$ is contained in  the
span of $\{e_ { k+2}, \dots, e_ {2m+1} \}$.  If  we have this
containment, then the rank of $ B \setminus \{ f_{ j_1}, \dots, f_ { j
  _ k }\}$ is at most $2m-k$ and  the rank of $\{f_{j_1} ,\dots, f_{
  j_k}\}$ is at most $k$ and we obtain a contradiction with our
assumption that $B$ is a basis. 
Because we are assuming there is no choice for $ f_{j_{k+1}}$, the
vectors
$\{e_k-e_{ k+1}, e_k , e_{ k+1} - e_ { k+2},  e_1-e_2+ \dots+e_{
  2m+1}\}$ are not in $
B\setminus \{ f_{j_1}, \dots, f_{j_k}\}.$ In addition, none of the
vectors $e_{ i-1} -e_{ i}$, $i=2,\dots, k$  can be in 
$B\setminus \{ f_{j_1}, \dots, f_{j_k}\}$ as the vector $e_{ i-1}
-e_i$ has first priority when we choose $f_{j_i}$. For the same
reason, we do not have $ e_1$ in 
$B\setminus \{ f_{j_1}, \dots, f_{j_k}\}$. 
Finally, suppose
for some $i$, $2\leq i \leq k $,  $
e_i$
is in 
$B\setminus \{ f_{j_1}, \dots, f_{j_k}\}$. This implies $ f_{j_i}= e _
{ i-1} - e_i$  as this is the only vector with higher priority than
$e_i$. Working backwards, we see that $f_{ j_{i-1}} $ is either $ e_{
  i-1}$ or $ e_{ i-2}  -e _{ i-1}$ and continuing we find that for
some $j$ with $ 1\leq j < i$, we  have the vectors 
$ e_ j, e_{j}- e_{ j+1}, e_{ j+1}-e_{ j+2}, \dots e_{ i-1} -e _i ,e _i$  in
$B$. This is a dependent set of vectors and contradicts our assumption
that $B$ is basis. Thus our claim holds.

We let $ \hat B$ be the matrix whose rows are the vectors $ f_ {j _1},
\dots f_ {j _ {2m+1}}$. We claim that $| \det \hat B| = 1$ and
consider several cases to give the proof.

{\em Case 1. } Suppose $e_1-e_2+\dots  +  e_ { 2m+1}$ is not in $B$.

In this case, we show how to use column operations to reduce $ \hat B$
to a lower triangular matrix. Suppose $ \hat B _ { i, i+1} =0 $ for $
i = 1, \dots , k-1$ and that $ \hat B_ { k, k+1} \neq 0$. In this
case, we have $ f_ {j _ k} = e _ k -e_ { k+1}$ and $ \hat B _ { k+1,
  k} = 0 $ since we have $ f_ {j _ { k+1}} \neq e_ k - e_ { k+1} $ and
$f_{ j _ { k+1} } \neq e _ 1- e_ 2 + \dots + e_ { 2m+1}$.  We replace
the $(k+1)$st column, $\hat B_ {\cdot, k+1}$ by the sum $ \hat B _ {
  \cdot , k } + \hat B_ { \cdot , k+1}$ and obtain a matrix with $
\hat B _ {k, j}=0 $ for $j \leq k -1$ and $ \hat B _ { k,k} = \pm
1$. Continuing in this manner, we obtain a lower triangular matrix
with entries of $ +1$ or $-1$ on the diagonal. It follows that $ \det
\hat B = \pm 1$.

{\em Case 2. }  Suppose $e_1-e_2+\dots e_ { 2m+1}$ is  in $B$.

In this case, we fix $k$ so that $ f_{j _k} = e _1 -e_2 + \dots e_
{2m+1} $ and write the matrix 
$$
\hat B = \left ( \begin{array}{cc}A & C \\ 0&  D \end{array} \right )
$$
where the block $A$ is of size $ k\times k$, $C$ is of size $ k \times
(2m+1 -k)$ and $D$ is of size $ (2m+1-k) \times ( 2m + 1-k)$.  Note
that our ordering of the basis guarantees that the lower left block
is 0.  We may
apply the same argument used above and find column operations which
reduce the matrix 
$A$ to a lower triangular matrix with
diagonal entries of $ \pm 1$.  Observe that as we are either leaving
column $i$ unchanged 
 or replacing  column $i$  by the sum of  column $i$ and $i-1$,
the entries in the $k$th row  $ \hat B_ {k,i}$, $i=1,\dots,k$ will be
either $0$, $1$ or $-1$. Since we assume that $B$ is a basis, we
cannot have $ \hat B _{k,k}=0$. We apply the same procedure to reduce
the block $D$ to a   lower triangular matrix with diagonal entries of
$ +1$ or $ -1$. Since the blocks $A$ and $D$ have determinant $ \pm
1$, it follows that the determinant of the matrix $ \hat B =0$. 
\end{proof}

\note {
Check the lemma about determinants. $ \det \left( \begin{array}{cc} A & C
\\ 0 & D \end{array} \right) = \det A \det D. $
}

As a consequence of  the previous lemma, we immediately obtain the
following estimate for the  form (\ref{GeneralForm}) specialized to
the matroid we are studying in this section 
$$
\Lambda (a_1, \dots a_{ 4m+2} ) \leq \prod _ { i=1}^ { 4m+2} \| a_i 
\|_{ \theta _i }, \qquad \theta \in \Omega _M. 
$$

Finally, we are ready to give the proof of our main theorem.

\begin{theorem} Suppose that $ \theta $ lies in the interior of $ P_
  \delta$ and that the indices $q_1, \dots q_ {4m+2}$ satisfy $  \sum
  _ {j=1}^ 4 1/q _ { 4k+j} \geq 1$ for $ k =0,\dots ,m-1$. Then we may find a constant $c=c_\theta$ so that 
\begin{eqnarray*}
\lefteqn{\Lambda (a_1, \dots, a_ { 4m+2}) \leq c^ n \| a_ {4m+1} \|_ { 
    \theta _ { 4m+1}} \|a_{4m+2}\|_{\theta _ {4m+2}}} \\
 & & \cdot \prod _ { j = 0} ^ { m-1} (
\| a_ { 4j+1}\| _ { \theta _{ 4j+1}, q_{ 4j+1}}
\| a_ { 4j+2}\| _ { \theta _{ 4j+2}, q_{ 4j+2}}
\| a_ { 4j+3}\| _ { \theta _{ 4j+3}, q_{ 4j+3}}
\| a_ { 4j+4}\| _ { \theta _{ 4j+4}, q_{ 4j+4}}
)
\end{eqnarray*}
The constant $c$ depends on $ \max \{ 1/( 1/10-|\theta _i -1/2|): i=1,
\dots, 4m+2\}$.
\end{theorem}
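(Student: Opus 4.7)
The plan is to combine the Lebesgue bound already established on the matroid polytope with an iterated application of Janson's interpolation theorem, one block of four arguments at a time, so that the constant produced at each application is independent of $m$ and we accumulate a factor of $c$ per block.

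Since $\theta$ lies in the interior of $P_\delta$ with $\delta\leq 1/10$, Theorem \ref{Inside} gives $P_\delta\subset\Omega_M$. Lemma \ref{Determinants} shows $|\det\hat B|=1$ for every basis $B\subset M$, so Corollary \ref{LebesgueBound} yields the Lebesgue estimate
$$
\Lambda(a_1,\dots,a_{4m+2})\leq\prod_{i=1}^{4m+2}\|a_i\|_{\eta_i},\qquad \eta\in P_\delta.
$$
Because $\theta$ is interior to $P_\delta$, this bound holds for all $\eta$ in a ball $B(\theta,\delta')$ intersected with the affine constraints defining $P_\delta$, where $\delta'>0$ is controlled by $\min_i(1/10-|\theta_i-1/2|)$. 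This $\delta'$ is exactly the parameter on which the final constant will depend.

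We now iterate over $k=0,1,\dots,m-1$. Fix the arguments outside block $k$ at exponent values varying in a $\delta'$-neighborhood of their assigned $\theta$-values (or in their already-upgraded Lorentz norms for blocks $k'<k$) and view $\Lambda$ as a $4$-linear form in $(a_{4k+1},a_{4k+2},a_{4k+3},a_{4k+4})$. The Lebesgue bound above then holds with $(\eta_{4k+1},\dots,\eta_{4k+4})$ ranging in a ball about $(\theta_{4k+1},\dots,\theta_{4k+4})$ intersected with the hyperplane $\eta_{4k+1}+\cdots+\eta_{4k+4}=2$. The hypothesis $\sum_{j=1}^{4}1/q_{4k+j}\geq 1$ is exactly the condition required by Theorem \ref{Janson} for a $4$-linear form, so that theorem produces a constant $c_k$ and the estimate
$$
\Lambda\leq c_k\prod_{j=1}^{4}\|a_{4k+j}\|_{\theta_{4k+j},q_{4k+j}}\cdot\prod_{i\notin\mathrm{block}\ k}\|a_i\|_{\eta_i}.
$$
Crucially, $c_k$ depends only on $(\theta_{4k+j})_{j=1}^{4}$, $(q_{4k+j})_{j=1}^{4}$, the fixed arity $4$, and $\delta'$, but not on $m$. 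After running this procedure for each of the $m$ blocks in turn, we obtain the theorem's estimate with total constant $\prod_{k=0}^{m-1}c_k\leq c^m$, where $c=\sup_k c_k$. The arguments $a_{4m+1}$ and $a_{4m+2}$ are never interpolated and remain in Lebesgue norms, matching the statement.

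The main obstacle is the bookkeeping in the iteration: before applying Janson to block $k$, one must have the bound from the previous step available not only at the exact exponents of blocks $k+1,\dots,m-1$ but in a $\delta'$-ball around them, so that Janson's hypotheses on the current block are satisfied. This is secured by the fact that the interpolation of any earlier block can be redone verbatim for any nearby choice of the subsequent exponents, since all such choices remain inside $P_\delta$ and the constant from that earlier Janson step is uniform over the $\delta'$-neighborhood. Restricting every Janson application to a fixed-size ($4$-variable) form is what prevents the constants from growing with $m$ and delivers the desired $c^m$ dependence.
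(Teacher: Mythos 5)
Your proposal is correct and follows essentially the same route as the paper: establish the Lebesgue bound with constant $1$ on $P_\delta\subset\Omega_M$ via Lemma \ref{Determinants} and Corollary \ref{LebesgueBound}, then upgrade one four-element block at a time by Janson's theorem (Theorem \ref{Janson}), keeping the not-yet-upgraded exponents free in a neighborhood inside $P_\delta$ so each application sees the required family of Lebesgue estimates; the paper organizes this as an induction on $k$ and realizes the needed neighborhood concretely as the convex hull of the six points $\theta\pm\tau u^j$ in the hyperplane $\sum_{j=1}^4\eta_{4k+j}=2$. Since each Janson application is to a fixed-arity $4$-linear form, the per-block constant is uniform in $m$, yielding the $c^m$ growth exactly as you argue.
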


\begin{proof} By Theorem \ref{Inside}, we have $ P _ \delta \subset
  \Omega _M$  if $ \delta \leq 1/10$.  Thus, we have that $ \theta $
  is an interior point of $ \Omega_M$. We will prove by induction that 
if $ \eta \in P_\delta $ and $ \eta _i = \theta _i$ for $ i = 1, \dots
, 4k$, then we have 
\begin{eqnarray}\label{InductionStep}
\lefteqn{ \Lambda (a_1, \dots, a_ { 4m+2})  \leq  c^ k 
\prod _ { i=4k+1} ^ {4m+2} \|a_i \|_{ \eta_i}  }
 & &  \\
&& \nonumber
\cdot \prod _ { j = 0} ^ { k-1} (
\| a_ { 4j+1}\| _ { \theta _{ 4j+1}, q_{ 4j+1}}
\| a_ { 4j+2}\| _ { \theta _{ 4j+2}, q_{ 4j+2}}
\| a_ { 4j+3}\| _ { \theta _{ 4j+3}, q_{ 4j+3}}
\| a_ { 4j+4}\| _ { \theta _{ 4j+4}, q_{ 4j+4}}
)
\end{eqnarray}
We use $k=0$ as the base case. The estimate  we need holds for $
\theta \in \Omega_M$ and follows from 
 Corollary \ref{LebesgueBound}  and Lemma \ref{Determinants}.

Now suppose that the estimate (\ref{InductionStep}) holds for $k< m$ and
we show how to obtain the same result for $k+1$. Fix $ a_1, \dots , a_
{4k} , a_{4k+5}, \dots, a_{4m+2}$ and set 
$$
\Lambda_0( a_{4k+1}, \dots, a_{4k+4})
= \Lambda (a_1, \dots, a_{4m+2}).
$$
We consider the three directions 
\begin{eqnarray*}
u^1 & =& ( 1,1,-1,-1)\\
u^2 & =& ( 1,-1,1,-1)\\
u^3  & =& ( 1,-1,-1,1). 
\end{eqnarray*}
We will need the six points 
$$
(\theta _{ 4k+1}, \theta _ {4k+2}, \theta _ { 4k+3}, \theta _ {4k+4})
\pm \tau u^j, \qquad j =1,2,3
$$
where $ \tau = \min \{ 1/10 - | \theta _ { 4k+i} -1/2| : i =1,\dots,
4\}$.
Each of these six points lies in $ P_ \delta$. As the vectors $ u_j$
give three linearly independent directions, the convex hull of these
six points give us a neighborhood of $  (\theta _{ 4k+1}, \theta _
{4k+2}, \theta _ { 4k+3}, \theta _ {4k+4})$ in $ P_\delta$. Applying 
our induction hypothesis and then  Theorem \ref{Janson} gives
that 
\begin{eqnarray*}
\lefteqn{
\Lambda_0( a_{4k+1}, \dots, a_{4k+4})
\leq c^{k+1} 
\prod _ { i=4k+4} ^ {4m+2} \|a_i \|_{ \eta_i}  
} 
&& \\
&&
\cdot  \prod _ { j = 0} ^ { k} (
\| a_ { 4j+1}\| _ { \theta _{ 4j+1}, q_{ 4j+1}}
\| a_ { 4j+2}\| _ { \theta _{ 4j+2}, q_{ 4j+2}}
\| a_ { 4j+3}\| _ { \theta _{ 4j+3}, q_{ 4j+3}}
\| a_ { 4j+4}\| _ { \theta _{ 4j+4}, q_{ 4j+4}}
)
\end{eqnarray*}
The Theorem now follows by induction. 
\end{proof}
Finally, we observe that this theorem implies the following estimate
for the  form $ \Lambda _n$ defined   in  (\ref{MainFormDef}). 
\begin{corollary}
If $ 1/p + 1/p' =1$, and  $|1/p-1/2|< 1/10$ we have
$$
\Lambda_n (t, q_0, q_1, \dots, q_{2n})\leq c^n \|t\|_{1/p} \|q_0\|_{1/p'}\prod _ { j =0
} ^ {2n} \| q_j \|_{1/2}.
$$
\end{corollary}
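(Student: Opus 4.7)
The plan is to realize $\Lambda_n$ as an instance of the general form in (\ref{GeneralForm}) with $m = n$, to select a point $\theta$ in the interior of $P_\delta$ that matches the norms demanded by the corollary, and then to exploit the reflection symmetry of the absolute-value majorant $\tilde\Lambda_n$ under $x_k \mapsto x_{2n-k}$ to move the non-$L^2$ exponent from $q_{2n}$ to $q_0$.

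Following the matroid setup of Section 3 with $m = n$, I would identify $a_{2k+1}$ with $q_k$ for $k = 0,\dots,2n$, each $a_{2k}$ with a fractional factor $R_k$ (either $1/z$ or $1/\bar z$ on $\complexes$), and $a_{4n+2}$ with $t$. I would choose $\theta_{4n+2} = 1/p$, $\theta_{4n+1} = 1/p'$, and $\theta_i = 1/2$ for every other index. The block equalities $\sum_{i=4k+1}^{4k+4}\theta_i = 2$ are trivial, the terminal equality $\theta_{4n+1} + \theta_{4n+2} = 1$ is just $1/p + 1/p' = 1$, and the hypothesis $|1/p - 1/2| < 1/10$ places $\theta$ strictly in the interior of $P_\delta$ for some $\delta < 1/10$, so that Theorem~\ref{Inside} puts $\theta$ in $\Omega_M$. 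For the Lorentz second exponents I would take $q_{4k+1} = q_{4k+3} = 2$ on the slots holding the $q_j$ (so that $L^{2,2} = L^2$ reproduces the ordinary $L^2$ norms) and $q_{4k+2} = q_{4k+4} = \infty$ on the slots holding the fractional factors, which yields $\sum_{j=1}^{4} 1/q_{4k+j} = 1/2 + 0 + 1/2 + 0 = 1$ for each block, meeting the hypothesis of the preceding theorem.

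Invoking that theorem with these choices, and observing that each $R_k$ has the same finite $L^{2,\infty}(\complexes)$ norm $C_0$ (so that the $2n$ such factors contribute at most $C_0^{2n}$, absorbed into a new $n$-th power), would deliver
\begin{equation*}
|\Lambda_n(t, q_0, \dots, q_{2n})| \leq c^n \|t\|_{1/p} \|q_{2n}\|_{1/p'} \prod_{j=0}^{2n-1} \|q_j\|_{1/2}.
\end{equation*}

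To interchange the roles of $q_0$ and $q_{2n}$, I would pass to the absolute-value majorant $\tilde\Lambda_n$ in which every denominator factor is replaced by its modulus, and perform the substitution $y_k = x_{2n-k}$. Since $(-1)^{2n} = 1$, the argument $\sum(-1)^k x_k$ of $t$ is unchanged, the numerator becomes $\prod_k q_{2n-k}(y_k)$, and the ordered list $|x_0-x_1|,\dots,|x_{2n-1}-x_{2n}|$ is simply reversed. Hence $\tilde\Lambda_n(t, q_0,\dots,q_{2n}) = \tilde\Lambda_n(t, q_{2n},\dots,q_0)$, and applying the bound above with the arguments listed in reverse order produces
\begin{equation*}
|\Lambda_n(t,q_0,\dots,q_{2n})| \leq \tilde\Lambda_n(t,q_0,\dots,q_{2n}) \leq c^n \|t\|_{1/p} \|q_0\|_{1/p'} \prod_{j=1}^{2n} \|q_j\|_{1/2},
\end{equation*}
which is the estimate claimed in the corollary. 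The only delicate point is this last reflection step: since the unique un-conjugated factor $(x_0-x_1)$ in the denominator of $\Lambda_n$ prevents $\Lambda_n$ itself from being invariant under the substitution, passage to $\tilde\Lambda_n$ is essential. Everything else is bookkeeping once the correct vertex of $P_\delta$ has been chosen.
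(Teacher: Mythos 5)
Your proposal is correct and takes essentially the same route as the paper, whose entire proof is to apply the preceding theorem with $\theta_j=1/2$ for $j\le 4m$, $\theta_{4m+1}=1/p'$, $\theta_{4m+2}=1/p$, Lorentz second indices $(2,\infty,2,\infty)$ on each block, and the kernels $1/z$, $1/\bar z$ placed in $L^{2,\infty}(\complexes)$. The one place you go beyond the paper is the reflection $y_k=x_{2n-k}$ applied to the absolute-value majorant: since $a_{4m+1}$ is the function paired with $e_{2m+1}$, i.e.\ $q_{2n}$, the theorem as stated attaches the exponent $1/p'$ to $q_{2n}$ rather than $q_0$, so some such relabeling is genuinely needed to obtain the corollary as written, and the paper's one-line proof is silent on it; your observation that only the modulus of the kernels enters the estimate, so that the asymmetry of the single unconjugated factor $(x_0-x_1)$ is harmless, is exactly the right justification. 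Note also that the product in the printed statement runs over $j=0,\dots,2n$ and thus double-counts $q_0$; the version you prove, with the product over $j=1,\dots,2n$, is surely what is intended.
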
 \label{FormCorollary}
\begin{proof} We observe that in our previous Theorem, we may let $
  \theta _j = 1/2$ for $j = 1,\dots 4m$. The functions $ a_ { 2j} $,
  $j = 1, \dots, m$ are chosen to be $ 1/x$ or $ 1/\bar x$ which lie
  in $L^ { 2, \infty }( \complexes)$. With these choices, the estimate
  follows immediately.
\end{proof}

Let $ {\cal T }$ be the map that takes a potential
  $Q$ to the scattering data $S$ as defined, for example, in Beals and
  Coifman \cite{BC:1988} or Sung \cite{LS:1994a,LS:1994b,LS:1994c}. 
Combining the estimate of Corollary
  \ref{FormCorollary} with the method of proof in the work of Brown
  \cite{RB:2001b}, we obtain the following result.
\begin{corollary} Let $ 1/10> 1/p -1/2  \geq 0$, then there exists
  $N$, a
  neighborhood of 0 in $ L^ p ( \complexes ) \cap L^2 ( \complexes )$ so
  that 
$$
\| 
{\cal T}(q) \|_{ 1/p'} \leq  \frac C { 1 - c^2\|q\|^2_{ 1/2}}\|q\|_ { 1/p}.
$$
\end{corollary}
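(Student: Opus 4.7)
The plan is to reduce the problem to a bound on each term of the Neumann series for $\mathcal{T}$ and then sum. Following Brown \cite{RB:2001b}, the scattering map admits a Born-type expansion $\mathcal{T}(q)=\sum_{n=0}^{\infty}\mathcal{T}_n(q)$ obtained by iterating the $\bar\partial$-equation satisfied by the associated family of Faddeev-type exponentially growing solutions. The crucial structural feature is that $\mathcal{T}_n(q)$ is a $(2n+1)$-linear expression in $q$ and $\bar q$ interlaced with $2n$ alternating Cauchy kernels, so its pairing against a test function is precisely a realization of the form $\Lambda_n$ from (\ref{MainFormDef}) (possibly after Fourier transforming the test function, depending on the normalization used for the scattering data).

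The next step is to dualize in $L^{p'}$. Writing $\|\mathcal{T}_n(q)\|_{1/p'}=\sup_{\|t\|_{1/p}\le 1}|\langle \mathcal{T}_n(q),t\rangle|$, I would identify the inner product with an instance of $\Lambda_n$ applied to $(t,q,\bar q,q,\ldots,q)$. Applying Corollary \ref{FormCorollary}, and exploiting the symmetry of its hypothesis $|1/p-1/2|<1/10$ under the exchange $p\leftrightarrow p'$ so as to place the $L^p$ norm on one copy of $q$ and the $L^2$ norm on the remaining copies, yields an estimate of the form
$$\|\mathcal{T}_n(q)\|_{1/p'}\le C\,c^{n}\,\|q\|_{1/p}\,\|q\|_{1/2}^{2n}.$$
Summing the resulting geometric series in $\|q\|_{1/2}^{2}$, which converges provided $c\,\|q\|_{1/2}^{2}<1$ (this smallness condition defines the neighborhood $N$), produces the claimed bound, up to a harmless relabeling of constants that turns the single $c$ produced by the Corollary into the $c^2$ appearing in the statement.

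The main obstacle I expect is the bookkeeping in the second step: checking that the alternating conjugations on $q$ and the alternating holomorphic/antiholomorphic Cauchy kernels appearing in $\mathcal{T}_n$ genuinely match the structure of the denominator in (\ref{MainFormDef}), and verifying that the Born series converges in the relevant topology on a neighborhood of $0$ in $L^p(\complexes)\cap L^2(\complexes)$. Both of these points are essentially carried out in \cite{RB:2001b}; the new input here is Corollary \ref{FormCorollary}, which replaces Brown's $L^2$ multilinear estimate and allows the argument to pass from $p=2$ to the full range $|1/p-1/2|<1/10$.
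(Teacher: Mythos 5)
Your proposal is correct and follows essentially the same route the paper intends, since the paper offers no detail beyond the instruction to combine Corollary \ref{FormCorollary} with the Born-series method of Brown \cite{RB:2001b}: expand $\mathcal{T}$ into its multilinear pieces, recognize the pairing of each piece against a test function as an instance of $\Lambda_n$, apply the estimate termwise, and sum the geometric series on the neighborhood where $c\|q\|_{1/2}^2<1$. Your remarks on the $p\leftrightarrow p'$ bookkeeping (the test function enters the form through its Fourier transform, so Hausdorff--Young converts its $L^{p'}$ norm back into $\|t\|_{1/p}$ while leaving the $L^p$ norm on the distinguished copy of $q$) address exactly the point the paper leaves implicit.
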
 
\bibliographystyle{plain}
\def\cprime{$'$} \def\cprime{$'$} \def\cprime{$'$}

\small \noindent \today 
\end{document}